\newtheorem{Th}{Theorem} 
\newtheorem{Prop}{Proposition}
\newtheorem{Conjecture}{Conjecture}
\theoremstyle{definition}
\newtheorem{definition}{Definition}
\newtheorem{Remark}{Remark} 
\definecolor{DarkGreen}{rgb}{0,0.5,0.1} 
\newcommand\soutD{\bgroup\markoverwith
{\textcolor{DarkGreen}{\rule[.5ex]{2pt}{1pt}}}\ULon}
\begin{document}
\title[Spectral isoperimetric inequalities]{Parallel coordinates in three dimensions and sharp spectral isoperimetric inequalities}
\author{Anastasia V.~Vikulova}
\address{L.D.Landau Institute for Theoretical Physics, pr. Ak Semenova 1a, Chernogolovka, 142432, Russia}
\email{vikulovaav@gmail.com}
\date{}
\maketitle
	\begin{abstract}
	
In this paper we show how the method of parallel coordinates 
can be extended to three dimensions.
As an application, we prove the conjecture of 
Antunes, Freitas and Krej\v{c}i\v{r}\'ik \cite{AFK} 
that ``the ball maximises the first Robin eigenvalue 
with negative boundary parameter
among all convex domains of equal surface area''
under the weaker restriction that the boundary of the domain 
is diffeomorphic to the sphere and convex or axiconvex. 
We also provide partial results in arbitrary dimensions. 
	
	\vspace{11pt}
	
	\noindent {\bf   Keywords.} Robin Laplacian, negative boundary parameter, eigenvalue optimisation, parallel coordinates.
	
	\vspace{11pt}

	\noindent {\bf Mathematics Subject Classification.} 58J50, 35P15.
	\end{abstract}
	
	\section{Introduction}

	The study of extremal eigenvalues of the Laplace operator is an active topic of research among mathematicians who are interested in spectral geometry. The most famous result in this field is certainly the Faber-Krahn inequality stating that the first Dirichlet eigenvalue of any domain is always greater than or equal to the first Dirichlet eigenvalue of the ball of the same volume (see~\cite{Faber_1923} and~\cite{Krahn_1924}). Similarly, the ball is an extremal domain for the first non-trivial Neumann eigenvalue, but now the ball is a maximiser (see~\cite{Szego_1954} and~\cite{Weinberger_1956}). For simply-connected planar domains, Weinstock proved that 
for the first Steklov eigenvalue the ball is a maximiser with the perimeter constraint~\cite{Weinstock_1954}. 

There also exists a Faber-Krahn-type inequality for the first eigenvalue of the Robin Laplacian with positive boundary parameter (see~\cite{Bossel_1986} and~\cite{Daners_2006}). On the other hand, Freitas and  Krej$\check{\textrm{c}}$i$\check{\textrm{r}}{\acute{\i}}$k established the surprising fact that the ball stops to be the extremal domain under the volume constraint for the Robin Laplacian with sufficiently large negative boundary parameter~\cite{FK7}. On the positive side, they proved the maximality of the disk among planar domains with fixed perimeter for the first eigenvalue of the Robin Laplacian with negative boundary parameter~\cite{AFK}. 
Similar optimisation results in the exterior of a compact set
were established in~\cite{KL1,KL2}.
The authors of~\cite{AFK} stated the conjecture 
that the reverse Faber-Krahn-type inequality
under the perimeter constraint
extends to higher dimensions under the convexity assumption~\cite[Conj.~4]{AFK}.
The primary motivation of this paper is to deal with this conjecture.
 
	Let $\Omega$ be a bounded domain in $\mathbb{R}^n$, $n \geq 1$,
with Lipschitz boundary~$\partial\Omega$. 
Let us consider the Robin eigenvalue problem	
\begin{equation}\label{problem}
	\left\{
	\begin{aligned}
	-\Delta u\,&=\,\lambda u \qquad  &&\mbox{in \,} \Omega \\
	\frac{\partial u}{\partial \nu}+\alpha u\,&=\,0 \qquad &&\mbox{on \,} 				\partial \Omega, \\
	\end{aligned}
	\right.
\end{equation}
where $\nu$ is an outer unit normal vector
and $\alpha$ is a real constant.
Notice that $\alpha=0$ coincides with the Neumann problem,
while $\alpha=\infty$ formally corresponds to the Dirichlet problem. 
We are interested in the first eigenvalue of~\eqref{problem},
denoted by $\lambda_1^{\alpha}(\Omega)$,
which admits the variational characterisation	
\begin{equation}\label{Rayleigh}
	\lambda_1^{\alpha}(\Omega)\,=\,\inf_{ 0 \ne u \in W^{1,2}(\Omega)}\frac{\|\nabla u\|^2_{L^2(\Omega)}+\alpha\|u\|^2_{L^2(\partial \Omega)}}{\|u\|^2_{L^2(\Omega)}}.
\end{equation}
We include the Dirichlet situation ``$\alpha=\infty$''
by the convention that then the boundary term in~\eqref{Rayleigh}
is suppressed and $W_0^{1,2}(\Omega)$ is taken for the test-function space.

We are mainly interested in the regime of negative~$\alpha$,
in which case $\lambda_1^{\alpha}(\Omega)$ is also negative.
Indeed, using a constant test function in~\eqref{Rayleigh}, 
we have 
 	
 	$$
 	\lambda_1^{\alpha}(\Omega)\,\leq\,\alpha\frac{\mathrm{Vol}_{n-1}(\partial \Omega)}{\mathrm{Vol}_n(\Omega)},
 	$$
where $\mathrm{Vol}_{k}(E)$ denotes the $k$-dimensional measure 
of a set $E \subset \mathbb{R}^n$.	
If $n=3$, we also write $\mathrm{Vol}_{2}(E)=:\mathrm{Area}(E)$.

We recall the following conjecture from~\cite{AFK}.
\begin{Conjecture}[{\cite[Conj.~4]{AFK}}]\label{Conj}
Let $\alpha\, \leq \, 0$ 
and $\Omega$ be a bounded convex domain in $\mathbb{R}^n$
with $n \geq 1$.
Then the $n$-ball maximises the first eigenvalue 
of problem~\eqref{problem} among all domains of
equal surface area.
\end{Conjecture}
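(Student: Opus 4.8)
\emph{Proof strategy.} The plan is to carry the distance--function argument of \cite{AFK} into higher dimensions. If $\alpha=0$ the statement is the trivial equality $\lambda_1^{0}(\Omega)=0=\lambda_1^{0}(B)$, where $B$ is the ball with $\mathrm{Vol}_{n-1}(\partial B)=\mathrm{Vol}_{n-1}(\partial\Omega)=:S$, so assume $\alpha<0$; then both $\lambda_1^{\alpha}(\Omega)$ and $\lambda_1^{\alpha}(B)$ are negative by the constant--test--function bound recorded above. Write $\delta(x):=\mathrm{dist}(x,\partial\Omega)$, let $R_\Omega$ be the inradius of $\Omega$ and $R_B$ the radius of $B$, set $\Omega_t:=\{x\in\Omega:\delta(x)\ge t\}$ and $P_\Omega(t):=\mathrm{Vol}_{n-1}(\{x\in\Omega:\delta(x)=t\})=\mathrm{Vol}_{n-1}(\partial\Omega_t)$. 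For a profile $f$ on $[0,R_\Omega]$ the test function $u=f(\delta)\in W^{1,2}(\Omega)$ gives, via the coarea formula and $|\nabla\delta|=1$ a.e.,
\[
\lambda_1^{\alpha}(\Omega)\ \le\ Q_\Omega(f):=\frac{\int_0^{R_\Omega}f'(t)^2 P_\Omega(t)\,dt+\alpha f(0)^2 S}{\int_0^{R_\Omega}f(t)^2 P_\Omega(t)\,dt}\,.
\]
Because the first eigenvalue of $B$ is simple, its eigenfunction is radial (rotational symmetry of $B$), so $\lambda_1^{\alpha}(B)=\min_f Q_B(f)$ over profiles $f$ on $[0,R_B]$, attained at the profile $g>0$ of that eigenfunction; here $P_B(t)=\mathrm{Vol}_{n-1}(\partial B_{R_B-t})$, equal to $4\pi(R_B-t)^2$ when $n=3$.

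Everything then rests on two purely geometric assertions --- this is the ``parallel coordinates in three dimensions'' input: (i) $R_\Omega\le R_B$; and (ii) $P_\Omega(t)\le P_B(t)$ for all $t\in[0,R_\Omega]$ (automatically with equality at $t=0$, since $P_\Omega(0)=S=P_B(0)$). Granting these, put $f=g$ restricted to $[0,R_\Omega]\subseteq[0,R_B]$. Since $0\le P_\Omega\le P_B$ on $[0,R_\Omega]$, $P_B\ge0$ on $[0,R_B]$, $g>0$, and the boundary term $g(0)^2 S$ is the same for $\Omega$ and $B$, the numerator $N$ and denominator $D$ of the Rayleigh quotient satisfy $N_\Omega(g)\le N_B(g)$ and $0<D_\Omega(g)\le D_B(g)$. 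As $N_B(g)=\lambda_1^{\alpha}(B)\,D_B(g)<0$, we conclude
\[
\lambda_1^{\alpha}(\Omega)\ \le\ \frac{N_\Omega(g)}{D_\Omega(g)}\ \le\ \frac{N_B(g)}{D_\Omega(g)}\ \le\ \frac{N_B(g)}{D_B(g)}\ =\ \lambda_1^{\alpha}(B),
\]
the middle two inequalities using $N_\Omega(g)\le N_B(g)$ together with $0<D_\Omega(g)\le D_B(g)$ and $N_B(g)<0$.

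For (i)--(ii) in dimension three I would argue thus. For $0\le s\le t$ one always has the inclusion $\Omega_t\oplus sB_1\subseteq\Omega_{t-s}$ ($B_1$ the closed unit ball, $\oplus$ Minkowski sum; indeed if $B(x,t)\subseteq\Omega$ and $|y-x|\le s$ then $B(y,t-s)\subseteq\Omega$). With $A_\Omega(t):=\mathrm{Vol}_3(\Omega_t)$, so $A_\Omega'=-P_\Omega$, Steiner's formula for the convex body $\Omega_t$ turns this into
\[
A_\Omega(t)+s\,P_\Omega(t)+s^2 M(\Omega_t)+\tfrac{4\pi}{3}s^3\ \le\ A_\Omega(t-s),
\]
where $M(K)=\int_{\partial K}H\,d\sigma$ is the total mean curvature ($H$ the arithmetic mean of the principal curvatures). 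Comparing the $s^2$--coefficients (legitimate for a.e. $t$, once one verifies $A_\Omega\in C^1$ and $P_\Omega\in BV$) gives $P_\Omega'(t)\le-2M(\Omega_t)$; Minkowski's quadratic inequality $M(K)^2\ge4\pi\,\mathrm{Vol}_2(\partial K)$ for convex $K$ then yields $P_\Omega'(t)\le-2\sqrt{4\pi P_\Omega(t)}$, i.e.\ $\tfrac{d}{dt}\sqrt{P_\Omega(t)}\le-2\sqrt\pi$. Integrating from $0$ with $P_\Omega(0)=S=4\pi R_B^2$ gives $\sqrt{P_\Omega(t)}\le2\sqrt\pi\,(R_B-t)$ on $[0,R_\Omega]$, which is (ii), and evaluating at $t=R_\Omega$ forces $R_\Omega\le R_B$, which is (i). For the weaker hypotheses of the theorem (boundary diffeomorphic to the sphere, convex or axiconvex) the same skeleton should persist once the level sets $\{\delta=t\}$ are controlled: they stay topological spheres, so Gauss--Bonnet keeps $\int_{\{\delta=t\}}K_{\mathrm{Gauss}}=4\pi$ and pins down the coefficient $4\pi$ above, while convexity/axiconvexity is what keeps $P_\Omega$ absolutely continuous and preserves the erosion inclusion.

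I expect the real obstacle to be exactly (i)--(ii): making the parallel--coordinates bookkeeping rigorous when the inner parallel surfaces are merely Lipschitz and may fold onto the cut locus, so that $A_\Omega$ and $P_\Omega$ must be handled in the $C^1$/$BV$ category and the differential inequality $P_\Omega'\le-2M(\Omega_t)$ is read a.e.; and in transferring the argument from genuine convexity to the ``diffeomorphic to the sphere / axiconvex'' class. Minkowski's quadratic inequality in $\mathbb{R}^3$ is classical. In dimensions $n\ge4$ the same scheme reduces the conjecture to the corresponding inequality between consecutive quermassintegrals plus an $n$--dimensional parallel--surface estimate, and it is the latter --- without extra geometric hypotheses --- that one should only expect to control partially. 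The ball as the unique equality case would follow by tracing equality through Minkowski's inequality, but this is not needed for the stated inequality.
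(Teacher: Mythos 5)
Your proposal is correct in spirit for the three-dimensional convex case and shares the essential geometric engine with the paper, but the organization is genuinely different, so it is worth spelling out the comparison. The paper first performs a volume-reparametrisation $r(t)=\bigl(\mathrm{Area}^{3/2}(\partial\Omega)-6\pi^{1/2}\mathrm{Vol}(\Omega_t)\bigr)^{1/3}/(2\pi^{1/2})$ of the parallel-coordinate Rayleigh quotient; the whole geometric content of the argument is compressed into the single estimate $|r'(t)|\le1$, which the author proves by expanding the Jacobian $h=1-2Mt+Kt^2$, using Gauss--Bonnet $\int K=4\pi$, and Minkowski's quadratic inequality $m\ge 2\pi^{1/2}\mathrm{Area}^{1/2}(\partial\Omega)$ applied once to $\partial\Omega$. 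This yields $\lambda_1^\alpha(\Omega)\le\mu_1^\alpha(A_{R_1,R_2})$ for \emph{all} $\alpha\in\mathbb{R}\cup\{\infty\}$ (Theorem~\ref{Thm.3D}), and only afterwards is the sign of $\alpha$ used, in the shell-vs-ball comparison of Proposition~\ref{Prop.comparison}. Your route skips the shell entirely: you restrict the ball's radial eigenprofile $g$ to $[0,R_\Omega]$, reduce everything to the pair of pointwise inequalities $R_\Omega\le R_B$ and $P_\Omega(t)\le P_B(t)$, and conclude by a three-step quotient comparison that relies on $N_B(g)<0$. The geometric core is the same (Steiner expansion near the cut, Gauss--Bonnet fixing the quadratic coefficient, Minkowski's $M^2\ge4\pi A$), but you apply Minkowski to the inner bodies $\Omega_t$ at every level via the differential inequality $P_\Omega'(t)\le-2M(\Omega_t)$ and $M(\Omega_t)^2\ge4\pi P_\Omega(t)$, rather than once to $\partial\Omega$; this is a cleaner ODE-style packaging of the same fact, and in fact your pointwise bound $P_\Omega(t)\le\bigl(\sqrt{S}-2\sqrt{\pi}\,t\bigr)^2$ is exactly what the paper's estimate reduces to after substituting the Minkowski bound. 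What you lose by skipping the shell: you need $\alpha<0$ from the start (the paper's Theorem~\ref{Thm.3D} is sign-independent and thus of independent interest), and you do not cover the paper's weaker ``axiconvex, boundary diffeomorphic to the sphere'' hypothesis --- you flag it but the argument as written uses genuine convexity at every step (erosion inclusion, Steiner formula for $\Omega_t$, monotonicity of the surface area and of $M$, Minkowski for $\Omega_t$). What you gain: for the purely convex case your argument does not actually need $\partial\Omega$ to be smooth, since the Steiner formula and Minkowski's inequality hold for general convex bodies; the paper's proof, which expands the smooth Jacobian and integrates Gauss curvature, does assume smoothness. Your honest acknowledgement of the $C^1$/$BV$ bookkeeping is appropriate --- this is standard for convex bodies and not a real gap. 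Finally, like the paper, you only obtain Conjecture~\ref{Conj} in three dimensions; your closing remark that the obstacle in $n\ge4$ is the absence of a suitable quermassintegral inequality matches the paper's partial Theorem~\ref{Thm.high}, which substitutes an explicit mean-curvature hypothesis~\eqref{mean} for the missing geometric input.
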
	
\begin{definition}
Let us recall that a Euclidean set is called \emph{axiconvex}
if it is rotationally invariant around an axis and its intersection
with any plane orthogonal to the symmetry axis is either a disk
or empty set (see~\cite{Dalphin}). 
\end{definition}

The following theorem establishes Conjecture~\ref{Conj}
in three dimensions.
\begin{Th}\label{Thm.main}
Let $\alpha\, \leq \, 0$, $\Omega$ be a bounded connected convex or axiconvex domain in $\mathbb{R}^3$, $\partial \Omega$ be a smooth boundary of the $\Omega$ and $\partial \Omega$ is diffeomorphic to the sphere. Then we have

$$
\lambda_{1}^{\alpha}(\Omega) \leq \lambda_{1}^{\alpha}(B), 
$$

\noindent where $B$ is the 3-ball and $\mathrm{Area}(\partial \Omega)\,=\,\mathrm{Area}(\partial B).$
\end{Th}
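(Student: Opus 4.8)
The plan is to exploit the variational characterisation~\eqref{Rayleigh} by constructing, for the general domain $\Omega$, a test function built out of the \emph{distance to the boundary}, and then to compare the resulting Rayleigh quotient with the one for the ball via the method of parallel coordinates extended to three dimensions (the central new tool of the paper). Concretely, let $\varphi$ be the first eigenfunction on the ball $B$; by radial symmetry $\varphi$ is a function of the distance to $\partial B$ alone, say $\varphi = g(t)$ where $t=\mathrm{dist}(x,\partial B)$ and $g$ is monotone. On $\Omega$ I would use the trial function $u(x) = g\bigl(\mathrm{dist}(x,\partial\Omega)\bigr)$, extended suitably (it is Lipschitz, hence admissible in $W^{1,2}(\Omega)$, and $|\nabla u| = |g'(t)|$ a.e. because the distance function has unit gradient). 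The numerator and denominator of the Rayleigh quotient then reduce to one-dimensional integrals in the variable $t$, weighted by the function $t \mapsto \mathrm{Vol}_{2}(\{\mathrm{dist}(\cdot,\partial\Omega) = t\})$ — the ``parallel coordinate'' area function of $\Omega$ — together with the boundary contribution $\alpha\,\mathrm{Area}(\partial\Omega)\,g(0)^2$.

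The key step is then a differential inequality for the parallel area function. Writing $A(t) = \mathrm{Vol}_2(\{x\in\Omega : \mathrm{dist}(x,\partial\Omega) > t\}$'s boundary$)$ (the area of the inner parallel surface at depth $t$), the coarea formula gives $\mathrm{Vol}_3(\Omega) = \int_0^{t_{\max}} A(t)\,dt$ and $A(0) = \mathrm{Area}(\partial\Omega)$. Under the convexity or axiconvexity hypothesis together with $\partial\Omega$ diffeomorphic to the sphere, the three-dimensional parallel-coordinate analysis should yield a pointwise bound of the form $A(t) \le A_B(t)$ (or an integrated version thereof) where $A_B$ is the corresponding area function of the ball of the same surface area; this is exactly where the Gauss–Bonnet theorem / the behaviour of the mean curvature under the inner parallel flow enters, and it is the genuine three-dimensional analogue of the planar computation in~\cite{AFK}. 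Since $g$ is monotone and the Rayleigh quotient is, after the reduction, a monotone functional of the weight $A(\cdot)$ (larger $A$ near $t=0$, where $g$ is largest, pushes the quotient up), the comparison $A \le A_B$ combined with the equal-area normalisation $A(0)=A_B(0)$ should force $\lambda_1^\alpha(\Omega) \le \mathcal{R}[u] \le \lambda_1^\alpha(B)$, with the last equality because $u$ pulled back to $B$ is precisely the ball's eigenfunction.

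The main obstacle I expect is establishing the parallel-area comparison $A(t)\le A_B(t)$ rigorously in three dimensions: in the plane the inner parallel sets shrink in a way controlled simply by the total turning of the boundary curve (a $2\pi$ budget), but in $\mathbb{R}^3$ one must control how the surface area of inner parallel surfaces decays, which involves both principal curvatures and the possible development of singularities of the distance function (the cut locus / skeleton). This is why the hypotheses are needed: convexity makes the inner parallel surfaces nested and smooth until they collapse, and the weaker ``axiconvex'' hypothesis reduces the problem to an effectively one-dimensional profile where the area function can be computed or estimated directly; the diffeomorphism-to-sphere assumption guarantees the right topological/Gauss–Bonnet input ($\int_{\partial\Omega} K = 4\pi$). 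A secondary technical point is justifying that the one-dimensional reduction and the monotonicity argument are not spoiled by the behaviour of $g$ and $g'$ near $t=t_{\max}$ (the center of the ball), but this should follow from the explicit form of the ball's eigenfunction (a modified Bessel / spherical-Bessel type profile) and a standard limiting/truncation argument.
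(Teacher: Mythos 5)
Your proposal shares the paper's central tool (parallel coordinates via the distance-to-boundary function and the coarea formula) but follows a genuinely different route: you insert the ball's eigenfunction profile $g$ as a test function on $\Omega$ and try to compare $\mathcal{R}_\Omega[g\circ\rho]$ directly with $\lambda_1^\alpha(B)$. The paper does not do this. It first proves Theorem~\ref{Thm.3D} by composing a \emph{generic} profile $\phi$ with $\mathrm{Vol}\circ\rho$ (not with $\rho$), changing variables to $r(t)$ defined by~\eqref{radii}, and reducing the whole variational problem on $\Omega$ to a radial problem on a spherical shell; the decisive input there is $|r'(t)|\le 1$ (Proposition~\ref{Prop.prime}). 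Only afterwards, via the separate and elementary Proposition~\ref{Prop.comparison}, does it pass from the shell to the ball. The detour through the shell is what lets Theorem~\ref{Thm.3D} hold for \emph{all} $\alpha\in\mathbb{R}\cup\{\infty\}$; your direct comparison, if completed, would work only for $\alpha\le 0$.

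As written, your argument has two genuine gaps. First, you invoke a pointwise bound $A(t)\le A_B(t)$ for the inner parallel areas but explicitly leave it as an expected ``obstacle.'' This bound does follow from the same ingredients the paper uses: the Heintze--Karcher Jacobian bound and Gauss--Bonnet ($\int_{\partial\Omega}K=4\pi$ since $\partial\Omega$ is diffeomorphic to the sphere) give $A(t)\le\mathrm{Area}(\partial\Omega)-2mt+4\pi t^2$, and the Minkowski-type inequality $m\ge 2\pi^{1/2}\mathrm{Area}^{1/2}(\partial\Omega)=4\pi R_B$ (valid for convex and axiconvex bodies, cf.\ \cite{BuZa,Dalphin}) turns this into $A(t)\le 4\pi(R_B-t)^2=A_B(t)$. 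But you have not written any of this down, and already the first step hides a nontrivial point: one must justify discarding (or suitably bounding) the contribution from the set $\{\xi:\,c(\xi)\le t\}$, i.e.\ past the cut locus, where the Jacobian $1-2M\xi t+K\xi t^2$ need not have a convenient sign. Second, the assertion that the Rayleigh quotient is ``a monotone functional of the weight $A$'' is not correct as stated: decreasing $A$ pointwise decreases both numerator and denominator, so no generic monotonicity is available. The argument that does work is sign-based. Writing $\lambda:=\lambda_1^\alpha(B)$, the ball's eigenfunction satisfies $\int_0^{R_B}\bigl(g'^2-\lambda g^2\bigr)A_B\,dt+\alpha A_B(0)g(0)^2=0$, so since $A(0)=A_B(0)$ the desired inequality $\mathcal{R}_\Omega[g\circ\rho]\le\lambda$ is equivalent to $\int_0^{R_B}\bigl(g'^2-\lambda g^2\bigr)\bigl(A-A_B\bigr)\,dt\le 0$ (extending $A$ by zero past the inradius). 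Only because $\alpha<0$ forces $\lambda<0$ is the factor $g'^2-\lambda g^2$ nonnegative, at which point $A\le A_B$ closes the argument; you would need to state this precisely rather than appeal to a vague monotonicity. With these two points filled in, your route is a correct and somewhat shorter alternative for $\alpha\le 0$, at the cost of not yielding the paper's more general Theorem~\ref{Thm.3D}.
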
	

For higher dimensions, we have the following partial result.
	\begin{Th}\label{Thm.high}
	Let $\alpha \leq 0$, $\Omega$ be a bounded connected domain in $\mathbb{R}^n$, $\partial \Omega$ be a smooth boundary of the $\Omega,$ $\bar \eta$ be a lower bound of the mean curvature of $\partial \Omega$ and assume that the following property holds true,

\begin{equation}\label{mean}
\bar{\eta}^{n-1} \geq  \frac{n \omega_n}{\mathrm{Vol}_{n-1}(\partial \Omega)},
\end{equation}

\noindent where $\omega_n$ is a volume of the unit n-ball, $\mathrm{Vol}_{n-1}(\partial \Omega)$ is a volume of $\partial \Omega$.

Then we have 

$$
\lambda_{1}^{\alpha}(\Omega) \leq \lambda_{1}^{\alpha}(B), 
$$

\noindent where $B$ is the n-ball and $\mathrm{Vol}_{n-1}(\partial \Omega)\,=\,\mathrm{Vol}_{n-1}(\partial B).$

	\end{Th}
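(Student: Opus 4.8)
The plan is a reverse Faber--Krahn argument carried out by the method of parallel coordinates. Let $R>0$ be fixed by $\mathrm{Vol}_{n-1}(\partial\Omega)=\mathrm{Vol}_{n-1}(\partial B)=n\omega_n R^{n-1}$, so that~\eqref{mean} becomes precisely $\bar\eta\ge 1/R$. Let $u_B$ be the first Robin eigenfunction of $B$; being positive and radial, it has the form $u_B(x)=\phi\bigl(\operatorname{dist}(x,\partial B)\bigr)$ with $\phi\in C^\infty([0,R])$, $\phi>0$. Setting $A_B(t):=n\omega_n(R-t)^{n-1}$ and evaluating~\eqref{Rayleigh} at $u_B$ through the coarea formula on $B$ expresses $\lambda:=\lambda_1^\alpha(B)\le 0$ as a ratio of one-dimensional integrals of $\phi$ against $A_B$; clearing its denominator gives the identity
\[
\int_0^R\bigl(\phi'(t)^2-\lambda\,\phi(t)^2\bigr)A_B(t)\,dt=-\alpha\,\phi(0)^2 A_B(0).
\]
I would then use the Lipschitz function $u:=\phi\circ\delta_\Omega$, with $\delta_\Omega(x):=\operatorname{dist}(x,\partial\Omega)$, as a trial function in~\eqref{Rayleigh} for $\Omega$ (its trace on $\partial\Omega$ equals $\phi(0)$). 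Since $|\nabla\delta_\Omega|=1$ almost everywhere, the coarea formula turns each integral in the resulting Rayleigh quotient into a one-dimensional integral against $\beta(t):=\mathcal H^{n-1}\bigl(\delta_\Omega^{-1}(t)\bigr)$, with $\beta(0)=\mathrm{Vol}_{n-1}(\partial\Omega)=A_B(0)$; writing $T$ for the inradius of $\Omega$ (so $\beta$ vanishes beyond $T$),
\[
\lambda_1^\alpha(\Omega)\ \le\ \frac{\int_0^{T}\phi'(t)^2\beta(t)\,dt+\alpha\,\phi(0)^2\beta(0)}{\int_0^{T}\phi(t)^2\beta(t)\,dt}.
\]

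The geometric heart is the pointwise comparison $\beta(t)\le A_B(t)$ for a.e.\ $t\in(0,R)$. Every point of $\delta_\Omega^{-1}(t)$ equals $x-t\,\nu(x)$ for a nearest boundary point $x\in\partial\Omega$, with $t$ at most the cut distance $\tau(x)$; for $t<\tau(x)$ the principal curvatures $\kappa_1(x),\dots,\kappa_{n-1}(x)$ of $\partial\Omega$ at $x$ satisfy $t\kappa_i(x)<1$, so the area formula for the inward normal map together with the arithmetic--geometric mean inequality gives
\[
\beta(t)\ \le\ \int_{\{\tau>t\}}\prod_{i=1}^{n-1}\bigl(1-t\kappa_i(x)\bigr)\,d\sigma(x)\ \le\ \int_{\partial\Omega}\bigl(1-t\,\eta(x)\bigr)_+^{\,n-1}\,d\sigma(x)\ \le\ (1-t/R)_+^{\,n-1}\,\mathrm{Vol}_{n-1}(\partial\Omega)=A_B(t),
\]
where $\eta(x):=\tfrac1{n-1}\sum_i\kappa_i(x)\ge\bar\eta\ge1/R$ is the mean curvature. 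Separately, at a point where the inball of radius $T$ touches $\partial\Omega$, internal tangency forces $\kappa_i\le 1/T$ for every $i$, hence $1/T\ge\eta\ge1/R$, i.e.\ $T\le R$; in particular $\phi$ is defined on all of $[0,T]$.

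To conclude, $\phi'^2-\lambda\phi^2\ge 0$ on $[0,R]$ because $\phi>0$ and $\lambda\le 0$, so using $\beta\le A_B$ on $[0,T]$, $\beta\equiv 0$ beyond $T$, $T\le R$, $\beta(0)=A_B(0)$, and the identity from the first paragraph,
\[
\int_0^{T}\!\bigl(\phi'^2-\lambda\phi^2\bigr)\beta\,dt+\alpha\,\phi(0)^2\beta(0)\ \le\ \int_0^{R}\!\bigl(\phi'^2-\lambda\phi^2\bigr)A_B\,dt+\alpha\,\phi(0)^2 A_B(0)=0,
\]
which says exactly that the numerator of the quotient bounding $\lambda_1^\alpha(\Omega)$ does not exceed $\lambda$ times its strictly positive denominator; hence $\lambda_1^\alpha(\Omega)\le\lambda=\lambda_1^\alpha(B)$. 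I expect the main difficulty to lie in the second paragraph: the AM--GM estimate on the shape operator is elementary, but making the normal-coordinate and area-formula bookkeeping rigorous requires controlling the generally non-smooth level sets of $\delta_\Omega$ and the cut locus of $\partial\Omega$ --- the technical substance of the parallel-coordinates method --- and it is precisely here that hypothesis~\eqref{mean} is used, convexity alone being insufficient when $n\ge4$ to dominate the parallel-surface areas by their spherical values.
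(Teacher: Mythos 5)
Your argument is correct, and it reaches the theorem by a genuinely different route than the paper. The paper keeps the radial profile $\phi$ arbitrary, tests with $\phi\circ\mathrm{Vol}_n\circ\rho$, and performs the change of variables $r(t)$ built from $\mathrm{Vol}_n(\Omega_t)$; the Heintze--Karcher bound together with~\eqref{mean} gives $|r'|\le 1$, which reduces the Rayleigh quotient to that of the mixed Neumann--Robin problem on the spherical shell $A_{R_1,R_2}$ (Proposition~\ref{Prop.high}, valid for every $\alpha\in\mathbb{R}\cup\{\infty\}$). Only afterwards does the sign of $\alpha$ enter, through the separate comparison $\mu_1^{\alpha}(A_{R_1,R_2})\le\lambda_1^{\alpha}(B_{R_2})$ of Proposition~\ref{Prop.comparison}, whose proof rests on $[r^{n-1}\phi\phi']'=r^{n-1}(\phi'^2-\lambda\phi^2)\ge 0$. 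You instead fix $\phi$ to be the radial profile of the ball's eigenfunction from the outset, test with $\phi\circ\delta_\Omega$, and compare the level-set areas pointwise, $\beta(t)\le A_B(t)$, via the same Heintze--Karcher estimate and the same reformulation $\bar\eta\ge 1/R$ of~\eqref{mean}; you then close the argument with the nonnegativity of $\phi'^2-\lambda\phi^2$ --- which is exactly the quantity the paper's shell comparison exploits, so the point where $\alpha\le 0$ is used is the same, but you use it inside a single integral inequality rather than as a separate eigenvalue comparison. Your version is shorter and bypasses the shell entirely, at the cost of not producing the intermediate bound $\lambda_1^{\alpha}(\Omega)\le\mu_1^{\alpha}(A_{R_1,R_2})$ (the higher-dimensional Payne--Weinberger-type statement), which holds for all $\alpha$ and is of independent interest in the paper. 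Two small points worth making explicit if you write this up: the equivalence of~\eqref{mean} with $\bar\eta\ge 1/R$ presupposes $\bar\eta>0$ (automatic for $n$ even, to be imposed for $n$ odd, as the paper also tacitly does), and your tangency argument for $T\le R$ is needed precisely so that $\phi$ is defined on the whole support of $\beta$ --- both are handled correctly in your sketch.
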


\begin{Remark}
After writing this paper, we discovered a preprint by
Bucur, Ferone, Nitsch and Trombetti~\cite{Bucur-Ferone-Nitsch-Trombetti},
where Conjecture~\ref{Conj} is established in all dimensions.
In three dimensions, however, our Theorem~\ref{Thm.main} is still
of interest because it proves the conjecture
under the weaker assumption that $\partial \Omega$ is 
diffeomorphic to the sphere and convex or axiconvex instead of~$\Omega$ being merely convex
which is assumed in~\cite{Bucur-Ferone-Nitsch-Trombetti}.
Also our Theorem~\ref{Thm.high} covers some non-convex geometries.
Moreover, our method of proof differs in some aspects 
from that of~\cite{Bucur-Ferone-Nitsch-Trombetti} 
and makes it therefore of independent interest
for applications in other isoperimetric problems (see below). 
\end{Remark}

The proof of the Theorems~\ref{Thm.main} and~\ref{Thm.high}
is based on the method of parallel coordinates
as developed for the Dirichlet problem by Payne and Weinberger
\cite{Payne-Weinberger_1961} 
and modified for the Robin problem by Freitas and Krej\v{c}i\v{r}\'ik~\cite{FK7}.
The restriction to two dimensions in these papers seem to be crucial.
In fact, it is explicitly stated in~\cite{FK5} that
the proof of the results of~\cite{Payne-Weinberger_1961}
``does not seem to have a straightforward extension to higher dimensions''
(see~\cite[p.~8]{FK5}).
In this paper we manage to get this extension to three dimensions.
Indeed, the following result is a three-dimensional generalisation 
of \cite[Eq.~(2.15)]{Payne-Weinberger_1961}
as well as \cite[Thm.~4]{FK7}.
Moreover, we state it for all values of the boundary parameter~$\alpha$.

\begin{Th}\label{Thm.3D}
Let $\alpha \in \mathbb{R}\cup\{\infty\}$, 
$\Omega$ be a bounded connected  domain in $\mathbb{R}^3$, $\partial \Omega$ be a smooth boundary of the $\Omega$ and $\partial \Omega$ is diffeomorphic to the sphere and convex or axiconvex. Then we have
$$
\lambda^{\alpha}_1(\Omega)\, \leq \, \mu^{\alpha}_1(A_{R_1,R_2}),
$$
where $\mu^{\alpha}_1(A_{R_1,R_2})$ is the first eigenvalue of the Laplacian 
in the spherical shell  
$
  A_{R_1,R_2} := \{x \in \mathbb{R}^3: R_1<|x|<R_2\}
$
with radii
\begin{equation}\label{radii}
\begin{aligned}
   R_1\,&=\,\frac{\sqrt[3]{\mathrm{Area}^{\frac{3}{2}}(\partial \Omega)-6\pi^{\frac{1}{2}}\mathrm{Vol}(\Omega)}}{2\pi^{\frac{1}{2}}},
\\
R_2\,&=\,\frac{\mathrm{Area}^{\frac{1}{2}}(\partial \Omega)}{2\pi^{\frac{1}{2}}},
\end{aligned}
\end{equation}
subject to the Neumann boundary condition at the inner boundary of~$A_{R_1,R_2}$
and the Robin boundary condition~\eqref{problem} at the outer boundary.  
\end{Th}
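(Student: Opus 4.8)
The plan is to prove the inequality by the variational characterisation~\eqref{Rayleigh}: it is a ``$\le$'' statement, so I would feed into it a competitor manufactured from the radial ground state of the shell via the method of parallel coordinates. Let $\psi=\psi(r)$ be the first eigenfunction of the problem defining $\mu_1^\alpha(A_{R_1,R_2})$; by simplicity and rotational symmetry it is radial, and from $(r^2\psi')'=-\mu_1^\alpha(A_{R_1,R_2})\,r^2\psi$ together with the Neumann condition $\psi'(R_1)=0$ one checks that $\psi>0$, that $\psi'$ keeps a constant sign on $[R_1,R_2]$, and that $\psi(R_2)\neq0$ unless $\alpha=\infty$ (in which case $\psi(R_2)=0$). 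Let $d(x):=\mathrm{dist}(x,\partial\Omega)$, a Lipschitz function with $|\nabla d|=1$ a.e.; let $\Omega_t:=\{x\in\Omega:\ d(x)>t\}$ be the inner parallel sets, $d_{\max}:=\max_{\overline\Omega}d$ the inradius, and set $V(t):=\mathrm{Vol}(\Omega_t)$, $A(t):=\mathrm{Area}(\partial\Omega_t)$, so that $V(0)=\mathrm{Vol}(\Omega)$, $A(0)=\mathrm{Area}(\partial\Omega)$ and $V'(t)=-A(t)$. I would then define $r=r(t)$ on $[0,d_{\max}]$ by
$$
r(t)^3=R_1^3+\frac{3}{4\pi}\,V(t)\,,\qquad\text{equivalently by}\ \ 4\pi r(t)^2 r'(t)=-A(t),\ \ r(0)=R_2\,,
$$
where $r(0)=R_2$ and $r(d_{\max})=R_1$ are exactly what the choice~\eqref{radii} of $R_1,R_2$ encodes, and use $v(x):=\psi\bigl(r(d(x))\bigr)$ as a test function in~\eqref{Rayleigh} (it lies in $W^{1,2}(\Omega)$, and in $W_0^{1,2}(\Omega)$ when $\alpha=\infty$, since then $v|_{\partial\Omega}=\psi(R_2)=0$).

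The point of these choices is that the coarea formula for $d$ turns the denominator and the boundary term of the Rayleigh quotient into their shell counterparts \emph{identically}: one gets $\int_\Omega v^2=\int_{R_1}^{R_2}\psi(s)^2\,4\pi s^2\,ds=\int_{A_{R_1,R_2}}\psi^2$, and, since $r(0)=R_2$ while $\mathrm{Area}(\partial\Omega)=4\pi R_2^2$ by~\eqref{radii}, $\int_{\partial\Omega}v^2=\psi(R_2)^2\,4\pi R_2^2=\int_{\{|x|=R_2\}}\psi^2$. For the gradient, again by the coarea formula and $|\nabla d|=1$,
$$
\int_\Omega|\nabla v|^2=\int_{R_1}^{R_2}|\psi'(s)|^2\,\frac{A(t(s))^2}{4\pi s^2}\,ds\,,\qquad\text{versus}\qquad\int_{A_{R_1,R_2}}|\nabla\psi|^2=\int_{R_1}^{R_2}|\psi'(s)|^2\,4\pi s^2\,ds\,.
$$
Hence the whole statement reduces to the single pointwise isoperimetric-type estimate $A(t)\le4\pi r(t)^2$ for all $t\in[0,d_{\max}]$: granting it, $\|\nabla v\|_{L^2(\Omega)}^2\le\|\nabla\psi\|_{L^2(A_{R_1,R_2})}^2$ while boundary terms and denominators agree, so the right-hand side of~\eqref{Rayleigh} at $v$ is at most $\mu_1^\alpha(A_{R_1,R_2})$, i.e.\ $\lambda_1^\alpha(\Omega)\le\mu_1^\alpha(A_{R_1,R_2})$.

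To prove $A(t)\le4\pi r(t)^2$ I would argue by a Gronwall-type comparison. Put $F(t):=4\pi r(t)^2-A(t)$; then $F(0)=0$, and from $V'=-A$ and $4\pi r^2 r'=-A$ one gets $F'(t)=-\tfrac{2A(t)}{r(t)}-A'(t)$. When $\Omega$ is convex, $d$ is concave so each $\Omega_t$ is a convex body, and the Minkowski inequality $\bigl(\int_{\partial\Omega_t}H\,d\mathcal H^2\bigr)^2\ge16\pi\,\mathrm{Area}(\partial\Omega_t)$ (a case of the Alexandrov--Fenchel inequalities), combined with the first-variation identity $A'(t)=-\int_{\partial\Omega_t}H\,d\mathcal H^2$ for inner parallel bodies, gives $A'(t)\le-4\sqrt\pi\,\sqrt{A(t)}$; substituting and using $2\sqrt\pi\,r-\sqrt A=F/(2\sqrt\pi\,r+\sqrt A)$ yields a \emph{linear} differential inequality $F'(t)\ge c(t)F(t)$ with $c(t)\ge0$, hence $F\ge0$ on $[0,d_{\max}]$. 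For axiconvex $\Omega$ I would first check that the inner parallel sets remain axiconvex and then obtain the analogue of the curvature bound directly from the profile curve, the remainder of the argument being unchanged.

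The step I expect to be delicate is exactly this last estimate, together with the regularity underlying it. In the plane it is the elementary Steiner identity $\mathrm{length}(\partial\Omega_t)=\mathrm{length}(\partial\Omega)-2\pi t$ used in~\cite{Payne-Weinberger_1961} and~\cite{FK7}; in three dimensions the surface area of the inner parallel bodies evolves through the total mean curvature and can only be controlled by a genuine geometric inequality, and two technical points must be faced: (a) $\partial\Omega_t$ is in general merely Lipschitz — one has to deal with the cut locus of $\partial\Omega$, and $A(\cdot)$ is only of bounded variation, so ``$A'=-\int H$'' and the Minkowski inequality must be used in an integrated form valid for non-smooth convex bodies; and (b) in the axiconvex case $\Omega_t$ need not be convex, so the Minkowski inequality is unavailable and the rotational structure has to be exploited by hand. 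Setting up this three-dimensional replacement for Steiner's identity is, I expect, the real content of the proof.
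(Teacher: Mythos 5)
Your overall strategy mirrors the paper's: build the test function by composing the radial shell eigenfunction $\psi$ with a function $r(\cdot)$ of the distance to $\partial\Omega$ determined by~\eqref{radii}, apply the coarea formula so that the $L^2(\Omega)$ and $L^2(\partial\Omega)$ terms match the shell quantities identically, and reduce the gradient comparison to the single inequality $\mathrm{Area}(\partial\Omega_t)\le 4\pi r(t)^2$, equivalently $|r'(t)|\le1$. That reduction and your coarea bookkeeping are exactly what the paper does, and your Gronwall algebra $F'\ge c(t)F$, $F(0)=0$ is correct.

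Where you diverge is in the proof of the key inequality, and the difference is substantive. You propose to differentiate $A(t)$, invoke the first-variation identity $A'(t)=-\int_{\partial\Omega_t}H$, and apply Minkowski's inequality to every inner parallel surface $\partial\Omega_t$, running a Gronwall comparison. The paper instead never differentiates $A$ and never touches the parallel surfaces: it writes the Jacobian of the normal exponential map in three dimensions explicitly as $h(\xi,t)=1-2M(\xi)t+K(\xi)t^2$, integrates to get the Steiner-type polynomial bound
$$
\mathrm{Area}(\partial\Omega_t)\ \le\ \mathrm{Area}(\partial\Omega)-2tm+4\pi t^{2},\qquad m:=\int_{\partial\Omega}M\,,
$$
using Gauss--Bonnet to evaluate $\int_{\partial\Omega}K=2\pi\chi(\partial\Omega)=4\pi$, integrates once more for $\mathrm{Vol}(\Omega_t)$, and then closes the estimate by a purely algebraic manipulation together with a \emph{single} use of the Minkowski-type inequality $m\ge 2\pi^{1/2}\mathrm{Area}^{1/2}(\partial\Omega)$ applied to the \emph{original} boundary only (valid for convex or axiconvex surfaces, \cite{BuZa,Dalphin} — which is exactly why the theorem's hypotheses are phrased on $\partial\Omega$). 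This is the three-dimensional replacement for the Steiner identity that you correctly identified as the real content, but it is a one-shot bound, not an ODE. It therefore bypasses every delicacy you flag at the end: no convexity or axiconvexity of $\Omega_t$ has to be propagated, no Minkowski inequality for degenerating non-smooth convex bodies is needed, and $A(\cdot)$ is only integrated, never differentiated across the cut locus. Those concerns are real obstructions to your version of the argument — especially in the axiconvex case, where Minkowski for $\partial\Omega_t$ is simply unavailable — so what the paper's route buys is precisely their avoidance, at the cost of being special to three dimensions where $h$ is a quadratic in $t$.
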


We also have a similar result in higher dimensions
under the hypothesis~\eqref{mean},
see Proposition~\ref{Prop.high}.

	\section{Preliminaries}
	
\subsection{Parallel coordinates in any dimension}
	Let $M$ be an $n$-dimensional Riemannian manifold with metric $g$. Let $\Omega$ be a domain with smooth boundary in $M$, and let $\rho: \, \Omega \to \mathbb{R}$ be the distance function from the boundary and $\rho(x)\,=\,\mathrm{dist}(x,\partial \Omega).$ Define the \emph{normal exponential map} $\Phi: \, [0,\infty) \times \partial \Omega \to M$ by setting	
	$$
	\Phi(t,\xi)\,=\,\exp_{\xi}t\nu(\xi),
	$$
where $\nu(\xi)$ is the unit normal vector to $\partial \Omega$
that we choose to be oriented inside $\Omega$. Define the \emph{cut-radius} map $c: \, \partial \Omega \to \mathbb{R}$ by the following property: the geodesic $\Phi_{\xi}\,=\,\Phi(\cdot,\xi):\,[0,t] \to M$ minimises the distance from $\partial \Omega$ 
if and only if $t \in [0,c(\xi)]$. 

The cut-radius map $c$ is known to be continuous and we clearly have $\max \, c \,=\,R$, where $R$ is the radius of the largest inscribed ball in~$\Omega$, 
which is called the inner radius of $\Omega$. The set of all points $\Phi(c(\xi),\xi)$, as $\xi$ runs in $\partial \Omega$, is called the \emph{cut-locus} of $\Omega$, and is denoted by 

$\mathrm{Cut}(\Omega).$ It is a closed subset of $\Omega,$ and it has zero measure. The map $\Phi$, when restricted to the open set $U\,=\,\{(t,\xi) \in (0,\infty) \times \partial \Omega:\, 0 \, < \, t \, < \, c(\xi)\}$, is a diffeomorphism onto $\Phi(U)\,=\,\Omega \setminus \mathrm{Cut}(\Omega)$. 

The pair $(t,\xi)$ is called \emph{parallel coordinates} based at $\partial \Omega.$ Let $dv_n$ be the Riemannian volume form of $\Omega$. Pulling it back by $\Phi$, we can write $\Phi^{*}(dv_n)(t,\xi)\,=\,h(t,\xi)dt\, dv_{n-1},$ where $dv_{n-1}(\xi)$ is the volume form of $\partial \Omega$ for the induced metric. As $\Phi$ is smooth, its Jacobian $h$ will also be smooth and positive on $U$. 

The following inequality for the Jacobian has been given by Heintze and Karcher (see the papers~\cite{Heintze-Karcher_1978} and~\cite{Savo_2001}):
$$
h(t,\xi)\, \leq \, [s'_{\bar K}(t)-\bar \eta s_{\bar K}(t)]^{n-1}_{+},
$$
where $a_+$ denotes $\max\{a,0\}$, $\bar K$ is a lower bound of the Ricci curvature of $\Omega$, $\bar \eta$ is a lower bound of the mean curvature of $\partial \Omega$ and 
$$
s_{\bar K}(t)\,=\,\begin{cases}
\frac{1}{\sqrt{\bar K}}\sin(t\sqrt{\bar K}),&\text{if $\bar K\,>\,0$;}\\
t,&\text{if $\bar K\,=\,0$;}\\
\frac{1}{\sqrt{|\bar K|}}\sinh(t\sqrt{\left|\bar K\right|}),&\text{if $\bar K\,<\,0$.}
\end{cases} 
$$
In our case, where $\Omega$ is a domain in $\mathbb{R}^n$ 
and the metric $g$ is an Euclidean metric, we have 
\begin{equation}\label{crude}
h(t,\xi)\, \leq \, [1-\bar \eta t]^{n-1}_{+}.
\end{equation}

Denote $\Omega_t = \{x \in \Omega:\,0\,<\,\rho(x)\,<\,t\}$.
Expressing $\mathrm{Vol}_{n-1}(\partial \Omega_t)$ in parallel coordinates, we obtain 
$$
\begin{aligned}
\mathrm{Vol}_{n-1}(\partial \Omega_t)\,
&=\,\int_{\{\xi \in \partial \Omega: c(\xi)>t\}} h(t,\xi)\, %
\\
&\leq \, [1-\bar \eta t]^{n-1}_{+} \mathrm{Vol}_{n-1}(\partial \Omega). 
\end{aligned}
$$
We say that  $\mathrm{Vol}_n(\Omega_t)$ is the volume of the shell $\Omega_t$. 
So we have $|\mathrm{Vol}_n(\Omega_t)-\mathrm{Vol}_n(\Omega_s)|\,=\,|\int^t_s \mathrm{Vol}_{n-1}(\partial \Omega_z) dz|$, and as the integrand admits a uniform upper bound, we see that $\mathrm{Vol}_n(\Omega_t)$ is uniformly Lipschitz on $[0,R]$ and we get $\mathrm{Vol}_n'(\Omega_t)\,=\,\mathrm{Vol}_{n-1}(\partial \Omega_t)$.
By definition, we put $\mathrm{Vol}_{n-1}(\partial \Omega_0)\,:=\,\mathrm{Vol}_{n-1}(\partial \Omega)$ and $\mathrm{Vol}_n(\Omega_R)\,:=\,\mathrm{Vol}_n(\Omega).$

\subsection{An inequality between eigenvalues in balls and spherical shells}
Let $B_{R}$ be an $n$-dimensional ball of radius $R$. By the rotational symmetry and regularity, we have
$$
\lambda^{\alpha}_1(B_R)\,=\,\inf_{0 \ne \phi \in C^{\infty}([0,R])} \frac{\int^R_0 |\phi'(r)|^2 r^{n-1} dr\,+\,\alpha R^{n-1} |\phi(R)|^2}{\int^R_0 |\phi(r)|^2 r^{n-1} dr}.
$$	
We include the Dirichlet situation ``$\alpha=\infty$''
by the convention that then the second term in the numerator
is suppressed and the functions~$\phi$ are additionally
assumed to to satisfy $\phi(R)=0$.

Now, let $A_{R_1,R_2} = B_{R_2}\setminus\overline{B_{R_1}}$ 
be an $n$-dimensional spherical shell 
and $R_1\,<\,R_2$ are the radii of the outer and inner spheres. 
Let $\mu^{\alpha}_1(A_{R_1,R_2})$ be the first eigenvalue of the Laplacian with Robin boundary condition with $\alpha$ on the outer sphere and with Neumann boundary condition on the inner sphere. We have
$$
\mu^{\alpha}_1(A_{R_1,R_2})\,=\,\inf_{0 \ne \phi \in C^{\infty}([R_1,R_2])} \frac{\int^{R_2}_{R_1} |\phi'(r)|^2 r^{n-1} dr\,+\,\alpha R^{n-1}_2 |\phi(R_2)|^2}{\int^{R_2}_{R_1} |\phi(r)|^2 r^{n-1} dr}.
$$
As above, we include the Dirichlet situation ``$\alpha=\infty$''
by the convention that then the second term in the numerator
is suppressed and the functions~$\phi$ are additionally
assumed to to satisfy $\phi(R_2)=0$.

The following Propisition is an extension of \cite[Prop.~5]{AFK}
to any dimension.

\begin{Prop}\label{Prop.comparison}
	Let $\alpha \, \leq \, 0.$ For any $0\,<\,R_1\, < \, R_2,$ we have 
	$$
	\mu^{\alpha}_1(A_{R_1,R_2})\, \leq \lambda^{\alpha}_1(B_{R_2}).
	$$
	\end{Prop}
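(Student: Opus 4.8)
The plan is a short trial-function argument combined with an elementary inequality between fractions; it is essentially the one-dimensional core of the parallel-coordinates method, and it generalises the proof of \cite[Prop.~5]{AFK} to arbitrary dimension. I would insert a radial first eigenfunction of the ball $B_{R_2}$, restricted to the interval $[R_1,R_2]$, into the Rayleigh quotient defining $\mu^{\alpha}_1(A_{R_1,R_2})$, and then split the radial integrals at the point $R_1$.

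First I would record the sign information that drives everything: since $\alpha\le 0$, inserting a constant into the Rayleigh quotient for $B_{R_2}$ (exactly as in the bound stated in the introduction) gives $\lambda^{\alpha}_1(B_{R_2})\le 0$. Next, let $\phi\in C^{\infty}([0,R_2])$ be a first radial eigenfunction of $B_{R_2}$, so that $\lambda^{\alpha}_1(B_{R_2})$ equals the radial Rayleigh quotient evaluated at $\phi$; being a ground state, $\phi$ can be chosen positive on $(0,R_2)$ (no interior nodes), and therefore its restriction to $[R_1,R_2]$ is a non-zero admissible trial function, which by the variational definition of $\mu^{\alpha}_1(A_{R_1,R_2})$ bounds it from above by the shell Rayleigh quotient of $\phi$.

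It then remains to compare the two quotients. Writing $N_0=\int_0^{R_1}|\phi'|^2 r^{n-1}\,dr\ge0$, $D_0=\int_0^{R_1}|\phi|^2 r^{n-1}\,dr\ge0$, $N_1=\int_{R_1}^{R_2}|\phi'|^2 r^{n-1}\,dr$, $D_1=\int_{R_1}^{R_2}|\phi|^2 r^{n-1}\,dr>0$ and $b=\alpha R_2^{n-1}|\phi(R_2)|^2$, one has $\lambda^{\alpha}_1(B_{R_2})=(N_0+N_1+b)/(D_0+D_1)$ while the shell quotient of $\phi$ equals $(N_1+b)/D_1$. The sign $\lambda^{\alpha}_1(B_{R_2})\le 0$ forces $N_0+N_1+b\le 0$, hence $N_1+b\le 0$ because $N_0\ge 0$; consequently $(N_1+b)D_0\le 0\le N_0 D_1$, and adding $(N_1+b)D_1$ to both sides and dividing by $D_1(D_0+D_1)>0$ gives $(N_1+b)/D_1\le(N_0+N_1+b)/(D_0+D_1)=\lambda^{\alpha}_1(B_{R_2})$. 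Combined with the previous paragraph this is the asserted inequality; the case $\alpha=0$ is subsumed automatically, both eigenvalues then being $0$.

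I do not expect a genuine obstacle here. The only points requiring (mild) care are the admissibility of the trial function — that is, that the restricted ground state does not vanish identically on $[R_1,R_2]$, which follows from $\phi>0$ on $(0,R_2)$ (a ground state has no interior nodes, or alternatively one invokes unique continuation for the radial ODE) — and the non-positivity $\lambda^{\alpha}_1(B_{R_2})\le 0$, without which the fraction comparison in the last step would fail to close.
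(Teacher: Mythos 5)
Your proof is correct, and while it uses the same trial function as the paper --- the restriction to $[R_1,R_2]$ of a positive radial ground state $\phi$ of $B_{R_2}$ --- the mechanism of the final comparison is genuinely different. The paper invokes the radial ODE: it integrates by parts, cancels the Robin boundary term at $R_2$, and arrives at $\mu^{\alpha}_1(A_{R_1,R_2})\le\lambda^{\alpha}_1(B_{R_2})-R_1^{n-1}\phi(R_1)\phi'(R_1)\big/\int_{R_1}^{R_2}\phi^2r^{n-1}dr$, then kills the extra term by observing that $[r^{n-1}\phi\phi']'=-\lambda r^{n-1}\phi^2+r^{n-1}(\phi')^2\ge0$ (this is where $\lambda^{\alpha}_1(B_{R_2})\le0$ enters), so that $r^{n-1}\phi\phi'$ is non-decreasing and hence non-negative, being zero at $r=0$. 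You never touch the differential equation: you split the numerator and denominator of the Rayleigh quotient at $R_1$ and apply a mediant-type fraction inequality, with the sign condition $\lambda^{\alpha}_1(B_{R_2})\le0$ entering through $N_1+b\le0$. Your route is more elementary and more robust --- it needs no integration by parts, no pointwise information on $\phi$ or $\phi'$ at $R_1$, and only that the radial infimum is attained by a nonnegative minimiser --- whereas the paper's route produces the explicit non-positive deficit $-R_1^{n-1}\phi(R_1)\phi'(R_1)$, which quantifies the gap between the two eigenvalues. Both arguments hinge on exactly the two points you flag: admissibility of the restricted ground state and the non-positivity of $\lambda^{\alpha}_1(B_{R_2})$ for $\alpha\le0$.
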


\begin{proof}
By symmetry, $\lambda_1^{\alpha}(B_{R_2})$ 
is the smallest solution of the problem
$$
\left\{
\begin{aligned}
-r^{-(n-1)}[r^{n-1}\phi'(r)]'\,&=\,\lambda \phi(r)\\
\phi'(0)\,&=\,0\\
\phi'(R_2)+\alpha \phi(R_2)\,&=\,0\\
\end{aligned}
\right.
$$
where $r \in [0,R_2].$
We obviously have 
$$
[r^{n-1}\phi(r)\phi'(r)]'\,=\,-\lambda_1^{\alpha}(B_{R_2})r^{n-1}\phi(r)^2+r^{n-1}\phi'(r)^2\,\geq\,0.
$$
So $r^{n-1}\phi(r)\phi'(r)$ is non-decreasing.

Let $\phi$ be the eigenfunction for this problem and $\phi$ normalises to one in $L^2((0,R_2),r^{n-1} dr).$ In these conditions and with the integrating by parts, we get
$$
\mu_1^{\alpha}(A_{R_1,R_2})\, \leq \, \lambda_1^{\alpha}(B_{R_2})-R_1^{n-1} \phi(R_1)\phi'(R_1). 
$$
 So we have proved that $\mu_1^{\alpha}(A_{R_1,R_2})\, \leq \, \lambda_1^{\alpha}(B_{R_2}).$
\end{proof}

\section{Proof of Theorems~\ref{Thm.main} and~\ref{Thm.3D}}
 	
 Let us pick a smooth function $\phi:\,[0,\mathrm{Vol}(\Omega)] \to \mathbb{R}$  and test-function $u\,=\,\phi \circ \mathrm{Vol} \circ \rho$ which is Lipschitz in $\Omega$. In our case, the Jacobian is equal to $h(\xi,t)\,=\,1-2M(\xi)t+K(\xi)t^2,$ where $M(\xi)$ is the mean curvature and $K(\xi)$ is the Gaussian curvature of $\partial\Omega$. 
So employing the parallel coordinates together with the co-area formula
(see~\cite{Savo_2001}), we obtain
$$
\begin{aligned}
\|u\|^2_{L^2(\Omega)}\,&=\,\int^R_0\phi^2(\mathrm{Vol}(\Omega_t))\mathrm{Vol}'(\Omega_t)dt;
\\
\|\nabla u\|^2_{L^2(\Omega)}\,&=\,\int^R_0 (\phi'(\mathrm{Vol}(\Omega_t)))^2 (\mathrm{Vol}'(\Omega_t))^3dt;
\\
\|u\|^2_{L^2_{(\partial \Omega)}} &=\, \mathrm{Area}(\partial \Omega) \phi^2(0).
\end{aligned}
$$

Now we make the change of variables

$$
r(t):=\frac{\sqrt[3]{\mathrm{Area}^{\frac{3}{2}}(\partial \Omega)-6\pi^{\frac{1}{2}}\mathrm{Vol}(\Omega_t)}}{2\pi^{\frac{1}{2}}}.
$$
Defining 
$\psi(r)\,=\,\phi \left( \frac{\mathrm{Area}^{3/2}(\partial \Omega)}{6 \pi^{1/2}}-\frac{4}{3}\pi r^3(t) \right)$, it follows that
$$
\begin{aligned}
\|u\|^2_{L^2(\Omega)}\,&=\,\int^{R_2}_{R_1} 4 \pi r^2 \psi^2(r)dr;
\\
\|\nabla u\|^2_{L^2(\Omega)}\,&=\,\int^{R_2}_{R_1} (\psi'(r))^2 4 \pi r^2 (r')^2 dr;
\\
\|u\|^2_{L^2(\partial \Omega)} &=\, \mathrm{Area}(\Omega) \psi^2(r_2),
\end{aligned}
$$

where $R_2=r(0)$ and $R_1=r(R)$, see~\eqref{radii}.
So we get	
$$
\begin{aligned}
\lambda^{\alpha}_1(\Omega) &\le \inf_{\psi \ne 0} \frac{\int^{R_2}_{R_1} (\psi'(r))^2 4 \pi r^2 (r')^2 dr+\alpha \mathrm{Area}(\partial \Omega) \psi^2(R_2)}{\int^{R_2}_{R_1} 4 \pi r^2 \psi^2(r)dr}
\\ 
&= 
\inf_{\psi \ne 0} \frac{\int^{R_2}_{R_1} (\psi'(r))^2  r^2 (r')^2 dr+\alpha R^2_2 \psi^2(R_2)}{\int^{R_2}_{R_1} r^2 \psi^2(r)dr}.
\end{aligned}
$$

The following proposition concludes the proof of Theorem~\ref{Thm.3D}.
\begin{Prop}\label{Prop.prime}
One has $|r'(t)|\, \leq \, 1$ for all $t \in [0,R]$.
\end{Prop}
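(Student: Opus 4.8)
The plan is to recast $|r'(t)|\le 1$, via the defining relation for $r$, as the monotonicity of a single scalar functional of the parallel flow, and to recognise that monotonicity as the Minkowski inequality applied to the inner parallel domains of $\Omega$.

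\emph{Step 1 (reformulation).} Cubing the change of variables that defines $r(t)$ and using $R_2=\mathrm{Area}^{1/2}(\partial\Omega)/(2\sqrt{\pi})$ gives
$$
\tfrac{4\pi}{3}\,r(t)^3=\tfrac{1}{6\sqrt{\pi}}\,\mathrm{Area}^{3/2}(\partial\Omega)-\mathrm{Vol}(\Omega_t).
$$
By the preliminaries, $t\mapsto\mathrm{Vol}(\Omega_t)$ is Lipschitz on $[0,R]$ with derivative $\mathrm{Vol}_2(\partial\Omega_t)=:a(t)\ge 0$; differentiating, $4\pi r(t)^2 r'(t)=-a(t)$, so $r'(t)\le 0$, while $r(t)\ge R_1>0$ away from the ball case (the last being the isoperimetric inequality in $\mathbb{R}^3$). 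Thus $|r'(t)|\le 1$ is equivalent to $a(t)\le 4\pi r(t)^2$, and, after substituting the defining formula for $r$ back in, to $a(t)^{3/2}+6\sqrt{\pi}\,\mathrm{Vol}(\Omega_t)\le\mathrm{Area}^{3/2}(\partial\Omega)$. Since equality holds here at $t=0$, it suffices to prove that
$$
\Psi(t):=a(t)^{3/2}+6\sqrt{\pi}\,\mathrm{Vol}(\Omega_t)
$$
is non-increasing on $[0,R]$.

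\emph{Step 2 (reduction to a curvature inequality; the convex case).} Since $\Psi'(t)=\tfrac32\sqrt{a(t)}\,a'(t)+6\sqrt{\pi}\,a(t)$, the inequality $\Psi'\le 0$ is equivalent to $a'(t)\le -2\sqrt{4\pi\,a(t)}$ for a.e.\ $t$. To establish this I would identify $\partial\Omega_t$ (up to a null set) with the boundary of the inner parallel domain $\Omega^{(t)}:=\{x\in\Omega:\rho(x)>t\}$ and compute the first variation of area in parallel coordinates. Using the exact Jacobian $h(t,\xi)=(1-\kappa_1(\xi)t)(1-\kappa_2(\xi)t)$, the representation $a(t)=\int_{\{c(\xi)>t\}}h(t,\xi)\,dv_2(\xi)$, and the fact that $h$ is positive on $U$ (so that the shrinking of the integration set only lowers the integral), one obtains
$$
a'(t)\le\int_{\{c(\xi)>t\}}\big[-(\kappa_1+\kappa_2)+2\kappa_1\kappa_2 t\big]\,dv_2
=-\int_{\partial\Omega^{(t)}}(\kappa_1^{(t)}+\kappa_2^{(t)})\,dA=-2\,\mathcal{M}\big(\Omega^{(t)}\big),
$$
where $\kappa_i^{(t)}=\kappa_i/(1-\kappa_i t)$ are the principal curvatures of $\partial\Omega^{(t)}$ and $\mathcal{M}(K)=\tfrac12\int_{\partial K}(\kappa_1+\kappa_2)\,dA$ is the total mean curvature. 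When $\Omega$ is convex, $\Omega^{(t)}$ is a convex body, and the Minkowski inequality $\mathcal{M}(K)^2\ge 4\pi\,\mathrm{Vol}_2(\partial K)$ (a Minkowski-type consequence of the Alexandrov--Fenchel inequalities) gives $\mathcal{M}(\Omega^{(t)})\ge\sqrt{4\pi\,a(t)}$, hence $a'(t)\le -2\sqrt{4\pi\,a(t)}$, hence $\Psi'\le 0$.

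\emph{Step 3 (the axiconvex case; the main obstacle).} The hard point is precisely the curvature inequality $a'(t)\le -2\sqrt{4\pi\,a(t)}$ when $\Omega$ is only axiconvex: then $\Omega^{(t)}$ is rotationally symmetric but need not be convex, need not be mean-convex, and may even disconnect when a neck pinches off, so the classical Minkowski inequality does not apply off the shelf. I would handle this by passing to the axial profile of $\Omega^{(t)}$, reducing the required bound $\mathcal{M}(\Omega^{(t)})\ge\sqrt{4\pi\,\mathrm{Vol}_2(\partial\Omega^{(t)})}$ to a one-variable inequality for surfaces of revolution and proving it by direct estimation; this is where the bulk of the computation lies. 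The remaining points are routine: the regularity of $t\mapsto\mathrm{Vol}(\Omega_t)$ and $t\mapsto a(t)$ (so that $\Psi$ is absolutely continuous and the displayed derivatives exist a.e.), the favourable sign of the cut-locus correction in the first-variation estimate, and the degenerate endpoint $t=R$ with $R_1=0$, i.e.\ $\Omega$ a ball, where $r(t)=R_2-t$ is explicit and $|r'|\equiv 1$.
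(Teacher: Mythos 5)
Your reformulation in Step~1 is correct and your first-variation computation in Step~2, leading to $a'(t)\le -2\mathcal{M}(\Omega^{(t)})$, is a valid way to organise the argument. However, as you acknowledge, the proof is not complete: Step~3 — establishing the Minkowski-type inequality $\mathcal{M}(\Omega^{(t)})^2\ge 4\pi\,\mathrm{Vol}_2(\partial\Omega^{(t)})$ for \emph{every} inner parallel body $\Omega^{(t)}$ of an axiconvex $\Omega$ — is left as a plan rather than carried out, and it is precisely the hard part. The inner parallel bodies of an axiconvex domain are rotationally symmetric but need not be convex nor even axiconvex (cross-sections can become annular, necks can pinch, the boundary need not stay mean-convex), so neither the classical Minkowski inequality nor the axiconvex result of Dalphin, Henrot, Masnou and Takahashi applies to them directly. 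A one-variable reduction ``by direct estimation'' is not obviously true and would amount to proving a new total-mean-curvature inequality for a class of surfaces of revolution; that is a genuine gap, not a routine verification.

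The paper avoids this obstacle entirely, and this is where the approaches genuinely diverge. Rather than differentiating $\Psi(t)=a(t)^{3/2}+6\sqrt{\pi}\,\mathrm{Vol}(\Omega_t)$ and needing a curvature inequality on \emph{every} level $\partial\Omega^{(t)}$, the paper integrates the parallel-coordinate Jacobian $h(t,\xi)=1-2M(\xi)t+K(\xi)t^2$ over $\partial\Omega$ to bound $\mathrm{Area}(\partial\Omega_t)$ and $\mathrm{Vol}(\Omega_t)$ above by explicit cubic polynomials in~$t$, with coefficients $\mathrm{Area}(\partial\Omega)$, $m=\int_{\partial\Omega}M$ and (via Gauss--Bonnet) $2\pi\chi(\partial\Omega)=4\pi$. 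It then invokes the total-mean-curvature inequality $m\ge 2\sqrt{\pi}\,\mathrm{Area}^{1/2}(\partial\Omega)$ \emph{only once}, for the original boundary $\partial\Omega$, where the hypothesis (convex or axiconvex, diffeomorphic to $S^2$) is available from the cited references, and finishes with elementary algebra. So the paper's argument requires the Minkowski-type inequality only at $t=0$, whereas your monotonicity-of-$\Psi$ scheme demands it for all $t$, including on bodies for which it is not known. To close your gap you would either have to prove that inequality on inner parallel bodies of axiconvex domains (nontrivial), or switch to the paper's global strategy of bounding the relevant geometric quantities by polynomials via the Jacobian expansion.
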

\begin{proof}
First, using the definition of $r(t)$, we obtain
$$
r'(t)\,=\,-\frac{\mathrm{Area}(\partial \Omega_t)}{(\mathrm{Area}(\Omega)^{3/2}-6 \pi^{1/2} \mathrm{Vol}(\Omega_t))^{2/3}}.
$$
For the area we get
$$
\begin{aligned}
\mathrm{Area}(\partial \Omega_t)\,&=\, \int_{\{\xi \in \partial \Omega, t<c(\xi),\Phi(\xi,t) \in \Omega\}} (1-2M(\xi)t+K(\xi)t^2) ds 
\\
 &\le \mathrm{Area}(\partial \Omega)-2tm+t^2 2 \pi \chi (\partial \Omega),
\end{aligned}
$$
where $m\,=\,\int_{\partial \Omega}M(\xi)$
and $\chi (\partial \Omega)$ is the Euler characteristics of $\partial\Omega$.

For the latter we have $\chi (\partial \Omega)=2$
provided that $\partial\Omega$ is diffeomorphic to the sphere.
For the former we use the inequality (see~\cite[Chapt.~4]{BuZa} and~\cite{Dalphin})
$$
m\,\geq \, 2 \pi^{\frac{1}{2}}\mathrm{Area}(\partial \Omega)^{\frac{1}{2}}.
$$
Consequently,
$$
\begin{aligned}
\mathrm{Area}(\partial \Omega_t)\, &\leq \, \mathrm{Area}(\partial \Omega)-2tm+t^2 4\pi,
\\
\mathrm{Vol}(\Omega_t)\, &\leq \, t \mathrm{Area}(\partial \Omega)-t^2 m+t^3 \frac{4}{3}\pi.
\end{aligned}
$$

By calculation we obtain
$$
\begin{aligned}
\lefteqn{
\mathrm{Area}^{3/2}(\partial \Omega_t)
}
\\ 
&\le (\mathrm{Area}(\partial \Omega)-2tm+t^2 4 \pi)^{3/2} 
\\
&= (\mathrm{Area}(\partial \Omega)-2tm+t^2 4 \pi)
(\mathrm{Area}(\partial  \Omega)-2tm+t^2 4 \pi)^{1/2}
\\
&
\le \, (\mathrm{Area}(\partial \Omega)-2tm+t^2 4 \pi)(\mathrm{Area}(\partial \Omega)-2t(2 \pi^{1/2} \mathrm{Area}^{\frac{1}{2}}(\partial \Omega))+t^2 4 \pi)^{1/2} 
\\
&
=\, (\mathrm{Area}(\partial \Omega)-2tm+t^2 4 \pi)(\mathrm{Area}^{\frac{1}{2}}(\partial \Omega)-2 \pi^{\frac{1}{2}} t)
\\
&
= \mathrm{Area}^{\frac{3}{2}}(\partial \Omega)-\mathrm{Area}^{\frac{1}{2}}(\partial \Omega) 2tm+4 \pi t^2 \mathrm{Area}^{\frac{1}{2}}(\partial \Omega)
\\
& \qquad
-2 \pi^{\frac{1}{2}} t \mathrm{Area}(\partial \Omega)+4 \pi^{1/2} m t^2-8 \pi^{\frac{3}{2}}t^3.
\end{aligned}
$$
So it remains to prove that
\begin{multline*}
2\mathrm{Area}^{\frac{1}{2}}(\partial \Omega) t m-4 \pi t^2 \mathrm{Area}^{\frac{1}{2}}(\partial \Omega)+2 \pi^{1/2} t \mathrm{Area}(\partial \Omega)-4 \pi^{1/2}mt^2+8\pi^{\frac{3}{2}}t^3 
\\
\ge \, 6\pi^{1/2}t \mathrm{Area}(\partial \Omega)-6\pi^{1/2}t^2m+8\pi\pi^{1/2}t^3.
\end{multline*}
Equivalently, 
transferring everything to the left part,
$$
t^2(2 \pi^{1/2}m-4\pi \mathrm{Area}^{\frac{1}{2}}(\partial \Omega))+t(2 \mathrm{Area}^{\frac{1}{2}}(\partial \Omega) m-4\pi^{1/2}\mathrm{Area}(\partial \Omega))\geq\,0.
$$
But the left-hand side is evidently 
greater than or equal to zero. 
It concludes the proof of the propisition.
\end{proof}

Combining Theorem~\ref{Thm.3D} with Proposition~\ref{Prop.comparison}, 
we get Theorem~\ref{Thm.main}.

\section{Proof of the Theorem~\ref{Thm.high}}

First of all, let us establish the following extension
of Theorem~\ref{Thm.3D} to higher dimensions. 

\begin{Prop}\label{Prop.high}
Let $\alpha \in \mathbb{R}\cup\{\infty\}$
and $\Omega$ be a smooth bounded connected domain in $\mathbb{R}^n$
with any $n \geq 1$.
Assume~\eqref{mean}. 
Then
$$
\lambda^{\alpha}_1(\Omega)\, \leq \, \mu^{\alpha}_1(A_{R_1,R_2}),
$$
where
$$
\begin{aligned}
R_1\,&=\,\left(\frac{\Gamma(\frac{n}{2})}{2\pi^{\frac{n}{2}}}\right)^{\frac{1}{n-1}}\left(\mathrm{Vol}_{n-1}^{\frac{n}{n-1}}(\partial \Omega)-\mathrm{Vol}_n(\Omega)n^{\frac{n}{n-1}}\omega_n^{\frac{1}{n-1}} \right)^{\frac{1}{n}},
\\
R_2\,&=\,\frac{\Gamma(\frac{n}{2})}{2\pi^{\frac{n}{2}}}^{\frac{1}{n-1}}\mathrm{Vol}_{n-1}(\partial \Omega)^{\frac{1}{n-1}},
\end{aligned}
$$
where $\Gamma$ is the Gamma function.
\end{Prop}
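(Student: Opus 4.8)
The plan is to imitate the proof of Theorem~\ref{Thm.3D} carried out in Section~3, replacing the exact three-dimensional Jacobian $h(t,\xi)=1-2M(\xi)t+K(\xi)t^{2}$ by the crude Heintze--Karcher bound~\eqref{crude} and the dimension-three algebra by its $n$-dimensional analogue. First I would fix a smooth $\phi\colon[0,\mathrm{Vol}_{n}(\Omega)]\to\mathbb R$ and form the Lipschitz test function $u=\phi\circ\mathrm{Vol}_{n}\circ\rho$. The computation of the three parts of the Rayleigh quotient~\eqref{Rayleigh} in parallel coordinates via the co-area formula, done in Section~3, does not use the dimension, and gives
\[
\|u\|^{2}_{L^{2}(\Omega)}=\int_{0}^{R}\phi^{2}(\mathrm{Vol}_{n}(\Omega_{t}))\,\mathrm{Vol}_{n}'(\Omega_{t})\,dt,\qquad
\|\nabla u\|^{2}_{L^{2}(\Omega)}=\int_{0}^{R}\bigl(\phi'(\mathrm{Vol}_{n}(\Omega_{t}))\bigr)^{2}\bigl(\mathrm{Vol}_{n}'(\Omega_{t})\bigr)^{3}\,dt,
\]
together with $\|u\|^{2}_{L^{2}(\partial\Omega)}=\mathrm{Vol}_{n-1}(\partial\Omega)\,\phi^{2}(0)$.

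Next I would change variables by $r=r(t)$, defined through $\omega_{n}r(t)^{n}=\omega_{n}R_{2}^{n}-\mathrm{Vol}_{n}(\Omega_{t})$, and set $\psi(r(t))=\phi(\mathrm{Vol}_{n}(\Omega_{t}))$, so that $u=\psi\circ r\circ\rho$. Using $\mathrm{Vol}_{n-1}(\partial\Omega)=n\omega_{n}R_{2}^{n-1}$ one checks that this $R_{2}$ is the one in the statement, that $r(0)=R_{2}$, and that $r(R)=R_{1}$ — the last assertion being the algebraic identity $\omega_{n}R_{2}^{n}-\mathrm{Vol}_{n}(\Omega)=\omega_{n}R_{1}^{n}$, which together with the $n$-dimensional isoperimetric inequality also gives $R_{1}\ge 0$. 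Differentiating the definition of $r$ gives $n\omega_{n}r(t)^{n-1}|r'(t)|=\mathrm{Vol}_{n-1}(\partial\Omega_{t})=\mathrm{Vol}_{n}'(\Omega_{t})$, so a direct substitution turns the three quantities above into $n\omega_{n}\int_{R_{1}}^{R_{2}}\psi^{2}(r)r^{n-1}\,dr$, $\,n\omega_{n}\int_{R_{1}}^{R_{2}}(\psi'(r))^{2}r^{n-1}(r'(t))^{2}\,dr$ and $n\omega_{n}R_{2}^{n-1}\psi^{2}(R_{2})$. The factor $n\omega_{n}$ cancels in the Rayleigh quotient, and since $\psi$ runs over $C^{\infty}([R_{1},R_{2}])$ as $\phi$ runs over $C^{\infty}([0,\mathrm{Vol}_{n}(\Omega)])$, we arrive at
\[
\lambda^{\alpha}_{1}(\Omega)\le\inf_{0\ne\psi\in C^{\infty}([R_{1},R_{2}])}\frac{\int_{R_{1}}^{R_{2}}(\psi'(r))^{2}r^{n-1}(r'(t))^{2}\,dr+\alpha R_{2}^{n-1}\psi^{2}(R_{2})}{\int_{R_{1}}^{R_{2}}\psi^{2}(r)r^{n-1}\,dr},
\]
with the usual Dirichlet convention when $\alpha=\infty$; the cut-locus and regularity points are handled exactly as in~\cite{FK7}.

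The crux, and the sole place where~\eqref{mean} is used, is the estimate $|r'(t)|\le 1$, the $n$-dimensional counterpart of Proposition~\ref{Prop.prime}. From $r'(t)=-\mathrm{Vol}_{n-1}(\partial\Omega_{t})/(n\omega_{n}r(t)^{n-1})$ and~\eqref{crude}, using once more $\mathrm{Vol}_{n-1}(\partial\Omega)=n\omega_{n}R_{2}^{n-1}$, one obtains
\[
|r'(t)|\le\frac{[1-\bar\eta t]_{+}^{n-1}\,\mathrm{Vol}_{n-1}(\partial\Omega)}{n\omega_{n}r(t)^{n-1}}=\left(\frac{[1-\bar\eta t]_{+}R_{2}}{r(t)}\right)^{n-1},
\]
so it suffices to prove $r(t)\ge[1-\bar\eta t]_{+}R_{2}$. (We may and do assume $\bar\eta>0$, as~\eqref{crude} is otherwise too weak to be of use here.) Integrating~\eqref{crude} yields $\mathrm{Vol}_{n}(\Omega_{t})\le\mathrm{Vol}_{n-1}(\partial\Omega)\,\tfrac{1-[1-\bar\eta t]_{+}^{n}}{n\bar\eta}=\tfrac{\omega_{n}R_{2}^{n-1}}{\bar\eta}\bigl(1-[1-\bar\eta t]_{+}^{n}\bigr)$, whence $r(t)^{n}\ge R_{2}^{n}-\tfrac{R_{2}^{n-1}}{\bar\eta}\bigl(1-[1-\bar\eta t]_{+}^{n}\bigr)$; since $1-[1-\bar\eta t]_{+}^{n}\ge 0$, this is $\ge[1-\bar\eta t]_{+}^{n}R_{2}^{n}$ exactly when $R_{2}^{n}\ge R_{2}^{n-1}/\bar\eta$, i.e.\ $\bar\eta R_{2}\ge 1$, which, since $R_{2}^{n-1}=\mathrm{Vol}_{n-1}(\partial\Omega)/(n\omega_{n})$, is precisely~\eqref{mean}. (Equivalently, the function $t\mapsto r(t)^{n}-([1-\bar\eta t]_{+}R_{2})^{n}$ vanishes at $t=0$ and is non-decreasing, the sign of its derivative being that of $\bar\eta R_{2}-1$.)

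Once $|r'(t)|\le 1$ is in hand, replacing $(r'(t))^{2}$ by $1$ in the numerator of the displayed Rayleigh quotient can only increase it — for every $\alpha\in\mathbb R\cup\{\infty\}$, since $(r'(t))^{2}$ multiplies solely the nonnegative gradient integrand — so taking the infimum over $\psi$ gives $\lambda^{\alpha}_{1}(\Omega)\le\mu^{\alpha}_{1}(A_{R_{1},R_{2}})$, which is the claim. Combined with Proposition~\ref{Prop.comparison} for $\alpha\le 0$ and with $\mathrm{Vol}_{n-1}(\partial B_{R_{2}})=\mathrm{Vol}_{n-1}(\partial\Omega)$, this also yields Theorem~\ref{Thm.high}. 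The step I expect to require the most care is organizational rather than analytic: checking that $t\mapsto r(t)$ is a genuine bi-Lipschitz bijection of $[0,R]$ onto $[R_{1},R_{2}]$ and that the resulting functions $\psi$ are numerous enough to compute $\mu^{\alpha}_{1}(A_{R_{1},R_{2}})$ — matters already dealt with in~\cite{FK7,AFK} and implicit in Section~3 — whereas the genuinely new input reduces to the observation that~\eqref{mean} is equivalent to $\bar\eta R_{2}\ge 1$.
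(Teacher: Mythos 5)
Your proof is correct and follows essentially the same route as the paper: the same test function $u=\phi\circ\mathrm{Vol}_n\circ\rho$, the same change of variables (your relation $\omega_n r(t)^n=\omega_n R_2^n-\mathrm{Vol}_n(\Omega_t)$ is exactly the paper's formula for $r(t)$ rewritten via $n\omega_n=2\pi^{n/2}/\Gamma(n/2)$), and the same key estimate $|r'(t)|\le 1$ derived from~\eqref{crude} and the resulting bounds on $\mathrm{Vol}_{n-1}(\partial\Omega_t)$ and $\mathrm{Vol}_n(\Omega_t)$. Your observation that the final inequality reduces precisely to $\bar\eta R_2\ge 1$, i.e.\ to~\eqref{mean}, is a cleaner packaging of the algebra the paper carries out explicitly, but it is the same argument.
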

\begin{proof}
We will prove this proposition as above for $\mathbb{R}^3$. We take a smooth function $\phi:\,[0,\mathrm{Vol}_{n}(\Omega)] \to \mathbb{R}$  and a test-function $u\,=\,\phi \circ \mathrm{Vol}_{n} \circ \rho$ which is Lipschitz in $\Omega.$ Employing the parallel coordinates together with the co-area formula (see~\cite{Savo_2001}), we obtain
$$
\begin{aligned}
\|u\|^2_{L^2(\Omega)}\,&=\,\int^R_0\phi^2(\mathrm{Vol}_n(\Omega_t))\mathrm{Vol}_n'(\Omega_t)dt;
\\
\|\nabla u\|^2_{L^2(\Omega)}\,&=\,\int^R_0 (\phi'(\mathrm{Vol}_n(\Omega_t)))^2 (\mathrm{Vol}_n'(\Omega_t))^3 dt;
\\
\|u\|^2_{L^2 (\partial \Omega)} \,&=\, \mathrm{Vol}_{n-1}(\partial \Omega) \phi^2(0).
\end{aligned}
$$

Now we need to change the variables
$$
r(t):=\left(\frac{\Gamma(\frac{n}{2})}{2\pi^{\frac{n}{2}}}\right)^{\frac{1}{n-1}}\left(\mathrm{Vol}_{n-1}^{\frac{n}{n-1}}(\partial \Omega)-\mathrm{Vol}_n(\Omega_t)n^{\frac{n}{n-1}}\omega_n^{\frac{1}{n-1}} \right)^{\frac{1}{n}}.
$$
Defining $\psi(r):=\phi\left( \mathrm{Vol}_{n-1}^{\frac{n}{n-1}}(\partial \Omega)n^{\frac{-n}{n-1}}\omega_n^{\frac{-1}{n-1}}-r^n\frac{2\pi^{\frac{n}{2}}}{\Gamma(\frac{n}{2})}n^{\frac{-n}{n-1}}\omega_n^{\frac{-1}{n-1}} \right)$, it follows that
$$
\begin{aligned}
\|u\|^2_{L^2(\Omega)}\,&=\,\int^{R_2}_{R_1}\psi^2(r)r^{n-1}\frac{2\pi^{\frac{n}{2}}}{\Gamma(\frac{n}{2})}dr;
\\
\|\nabla u\|^2_{L^2(\Omega)}\,&=\,\int^{R_2}_{R_1}(\psi'(r))^2\frac{r^{n-1}2\pi^{\frac{n}{2}}}{\Gamma(\frac{n}{2})}(r'(t))^2dr;
\\
\|u\|^2_{L^2(\partial \Omega)}\,&=\, \mathrm{Vol}_{n-1}(\partial \Omega)\psi^2(R_2).
\end{aligned}
$$
So the change of the variables gives
$$
\begin{aligned}
\lambda_1^{\alpha}(\Omega)\,
&\leq\,\inf_{0 \ne \psi \in C^{\infty}([R_1,R_2])}\frac{\int^{R_2}_{R_1}(\psi'(r))^2\frac{r^{n-1}2\pi^{\frac{n}{2}}}{\Gamma(\frac{n}{2})}(r'(t))^2dr+\alpha \mathrm{Vol}_{n-1}(\partial \Omega)\psi^2(R_2)}{\int^{R_2}_{R_1}\psi^2(r)r^{n-1}\frac{2\pi^{\frac{n}{2}}}{\Gamma(\frac{n}{2})}dr} 	
\\
&=\,\inf_{0 \ne \psi \in C^{\infty}([R_1,R_2])}\frac{\int^{R_2}_{R_1}\psi'(r)^2 r^{n-1}(r'(t))^2dr+\alpha\psi^2(R_2)R_2^{n-1}}{\int^{R_2}_{R_1}\psi^2(r)r^{n-1}dr}.
\end{aligned}
$$

It remains to prove that $|r'(t)|\, \leq \, 1.$
First, using the definition of $r(t)$, we obtain
$$
r'(t)\,=\,\frac{-V_{n-1}(\partial \Omega_t)}{\left( V_{n-1}^{\frac{n}{n-1}}(\partial \Omega)-\mathrm{Vol}_n(\Omega_t)n^{\frac{n}{n-1}}\omega_{n}^{\frac{1}{n-1}}\right)^{\frac{n-1}{n}}}.
$$
We claim that
$$
V_{n-1}^{\frac{n}{n-1}}(\partial \Omega_t)\,\leq \, V_{n-1}^{\frac{n}{n-1}}(\partial \Omega)-\mathrm{Vol}_n(\Omega_t)n^{\frac{n}{n-1}}\omega_{n}^{\frac{1}{n-1}}.
$$
Indeed, this follows from
$$
\mathrm{Vol}_{n-1}(\partial \Omega_t)\, \leq \, (1-\bar \eta t)_{+}^{n-1} \mathrm{Vol}_{n-1}(\partial \Omega)
$$
and
$$
\mathrm{Vol}_{n}(\Omega_t)\, \leq \, \frac{1}{n \bar \eta}(1-(1-\bar \eta t)^n)_{+} \mathrm{Vol}_{n-1}(\partial \Omega),
$$
where we have used~\eqref{crude}. 
Then we have
$$
\begin{aligned}
\lefteqn{
\mathrm{Vol}_{n-1}^{\frac{n}{n-1}}(\partial \Omega)-\mathrm{Vol}_n(\Omega_t)n^{\frac{n}{n-1}}\omega_n^{\frac{1}{n-1}}-\mathrm{Vol}_{n-1}^{\frac{n}{n-1}}(\partial \Omega_t)
}
\\
&\geq
\mathrm{Vol}_{n-1}^{\frac{n}{n-1}}(\partial \Omega)-n^{\frac{1}{n-1}} \omega^{\frac{1}{n-1}}\frac{1}{\bar \eta}(1-(1-\bar \eta t)^n)_{+}\mathrm{Vol}_{n-1}(\partial \Omega)-(1-\bar \eta t)^n_{+} \mathrm{Vol}^{\frac{n}{n-1}}_{n-1}(\partial \Omega)
\\
& \ge
\mathrm{Vol}_{n-1}(\partial \Omega)\left(\mathrm{Vol}_{n-1}^{\frac{1}{n-1}}(\partial \Omega)(1-(1-\bar \eta t)^n_{+})-n^{\frac{1}{n-1}} \omega^{\frac{1}{n-1}}\frac{1}{\bar \eta}(1-(1-\bar \eta t)^n)_{+}\right)
\\
&\geq \, 0.
\end{aligned}
$$
Here the last inequality holds true due to~\eqref{mean}.
This implies $|r'(t)|\,\leq\,1$ and therefore
yields that $\lambda_1^{\alpha}(\Omega)\,\leq\,\mu_1^{\alpha}(A_{R_1,R_2}).$
\end{proof}

For $R_1$ and $R_2$ as in Proposition~\ref{Prop.high} 
we have $\mathrm{Vol}_{n}(A_{R_1,R_2})\,=\,\mathrm{Vol}_{n}(\Omega)$ and $\mathrm{Vol}_{n-1}(\partial B_{R_2})\,=\,\mathrm{Vol}_{n-1}(\partial \Omega).$

Combining Proposition~\ref{Prop.high} 
with Proposition~\ref{Prop.comparison}, 
we obtain $\lambda_1^{\alpha}(\Omega)\,\leq\,\mu_1^{\alpha}(A_{R_1,R_2})\,\leq\,\lambda_1^{\alpha}(B_{R_2}).$ This completes the proof of Theorem~\ref{Thm.high}.

\begin{Remark}
Let us give examples of domains for which the inequality~\eqref{mean} holds.

First, let us look at the ellipsoid of revolution. 
The ellipsoid may be parameterized as 

$$
\left\{
\begin{aligned}
z\,=\,a \cos(\theta) \cos(\phi),\\
y\,=\,a \cos(\theta) \sin(\phi),\\
x\,=\,c \sin(\theta).\\
\end{aligned}
\right.
$$

Suppose $c\,>\,a;$ then $a\,=\,mc,$ where $m \in (0,1).$
The lower bound of the mean curvature is equal to $\bar \eta\,=\,\frac{c^2+a^2}{2ac^2}.$ The area of this ellipsoid is equal to $\mathrm{Vol}_{n-1}(\partial \Omega)\,=\, 2\pi a \left(a+\frac{c^2}{\sqrt{c^2-a^2}}\arcsin(\frac{\sqrt{c^2-a^2}}{c}) \right).$
So, the inequality is true when
$$
m(1+m^2)^2\left(m+\frac{\arcsin(\sqrt{1-m^2})}{\sqrt{1-m^2}}\right) \geq 8m^2.
$$
Obviously, such $m$ exist in the interval $ (0,1).$
	
Secondly, let us look at the torus. The torus may be parameterized as 

$$
\left\{
\begin{aligned}
x\,=\,(R+r\cos(\theta))\cos(\phi),\\
y\,=\,(R+r\cos(\theta))\sin(\phi),\\
z\,=\,r\sin(\theta).\\
\end{aligned}
\right.
$$
The lower bound of the mean curvature is equal to $\bar \eta\,=\,\frac{R-2r}{r(R-r)}.$ Suppose $R\,>\,r.$ Let us $r\,=\,mR,$ where $m \in (0,\frac{1}{2}).$  The area of this ellipsoid is equal to $\mathrm{Vol}_{n-1}(\partial \Omega)\,=\, 4 \pi^2 Rr.$
So, the inequality is true when
$$
(1-2m)^2 \pi \geq (1-m)^2 m.
$$
Obviously, such $m$ exist in the interval $ (0,\frac{1}{2}).$	
\end{Remark}

\bigskip
\subsection*{Acknowledgment}
%

This work is supported by the Russian Science Foundation under grant 18-11-00316 Geometric methods in non-linear problems of mathematical physics.
 
The author is indebted to A.~V.~Penskoi who ispired the interest to spectral geometry for suggesting this problem. The author also is indebted to O.~I.~Mokhov for fruitful discussions and important remarks. The author is grateful to D.~Krej\v{c}i\v{r}\'ik
and V.~Lotoreichik for useful discussions and to the Czech Technical University for its hospitality.

\bigskip


\subsection*{Conflict of interest}
%

There is no conflict of interest.

%
%

\providecommand{\bysame}{\leavevmode\hbox to3em{\hrulefill}\thinspace}
\providecommand{\MR}{\relax\ifhmode\unskip\space\fi MR }
\providecommand{\MRhref}[2]{%
  \href{http://www.ams.org/mathscinet-getitem?mr=#1}{#2}
}
\providecommand{\href}[2]{#2}

\end{document}